\newtheorem{theorem}{Theorem}[section]
\newtheorem{lemma}[theorem]{Lemma}
\newtheorem{proposition}[theorem]{Proposition}
\newtheorem{corollary}[theorem]{Corollary}
\newtheorem{definition}[theorem]{Definition}
\newtheorem{remark}[theorem]{Remark}
\newcommand{\R}{\mathbb{R}}
\newcommand{\N}{\mathbb{N}}
\newcommand{\E}{\mathbb{E}}
\newcommand{\RKHS}{\mathcal{H}}
\newcommand{\ip}[2]{\langle #1, #2 \rangle}
\begin{document}

\title{\textbf{Spectral Equivariance and Geometric Transport in Reproducing Kernel Hilbert Spaces: A Unified Framework for Orthogonal Polynomial and Kernel Estimation}}

\author{Jocelyn Nemb\'e\\[0.5em]
\small L.I.A.G.E, Institut National des Sciences de Gestion\\
\small BP 190, Libreville, Gabon\\
\small \texttt{jnembe@hotmail.com}\\[0.3em]
\small and\\[0.3em]
\small Modeling and Calculus Lab, ROOTS-INSIGHTS\\
\small Libreville, Gabon\\
\small \texttt{jnembe@root-insights.com}}

\date{December 2025}

\maketitle

\begin{abstract}
We develop a unified geometric framework for nonparametric estimation based on the notion of Twin Kernel Spaces, defined as orbits of a reproducing kernel under a group action. This structure induces a family of transported RKHS geometries in which classical orthogonal polynomial estimators, kernel estimators, and spectral smoothing methods arise as projections onto transported eigenfunction systems. Our main contribution is a Spectral Equivariance Theorem showing that the eigenfunctions of any transported kernel are obtained by unitary transport of the base eigenfunctions. As a consequence, orthogonal polynomial estimators are equivariant under geometric deformation, kernel estimators correspond to soft spectral filtering in a twin space, and minimax rates and bias--variance tradeoffs are invariant under transport. We provide examples based on Hermite and Legendre polynomials, affine and Gaussian groups, and illustrate the effectiveness of twin transport for adaptive and multimodal estimation. The framework reveals a deep connection between group actions, RKHS geometry, and spectral nonparametrics, offering a unified perspective that encompasses kernel smoothing, orthogonal series, splines, and multiscale methods.
\end{abstract}

\noindent\textbf{Keywords:} Reproducing kernel Hilbert spaces, orthogonal polynomials, spectral methods, group actions, nonparametric estimation, equivariance, minimax rates.

\noindent\textbf{MSC 2020:} 62G05, 46E22, 42C05, 62G20, 47B32.

\tableofcontents

\section{Introduction}
\label{sec:intro}

\subsection{Motivation}

Nonparametric estimation constitutes a cornerstone of modern statistical inference, with applications spanning density estimation, regression, and functional data analysis \citep{Tsybakov2009,Wasserman2006}. Two classical paradigms have dominated this field: orthogonal series estimators based on polynomial expansions \citep{Efromovich1999,Johnstone2017} and kernel smoothing methods \citep{Wand1995,Silverman1986}. While these approaches appear methodologically distinct---the former relying on spectral truncation in a fixed basis, the latter on local averaging with bandwidth selection---both share deep connections to reproducing kernel Hilbert spaces (RKHS) \citep{Berlinet2004,Cucker2002}.

The present work is motivated by the observation that many seemingly different nonparametric methods can be understood as projections in different geometric ``reference frames'' of a common underlying RKHS structure. Specifically, we show that classical orthogonal polynomial systems---Hermite, Legendre, Laguerre, Jacobi, and their variants---arise naturally as eigenfunctions of transported kernels under group actions. This perspective unifies diverse estimation procedures and reveals fundamental geometric invariances that have not been systematically exploited in the statistical literature.

The theory of RKHS and Mercer kernels provides a natural framework for studying such connections \citep{Aronszajn1950,Steinwart2008}. Classical results on spectral decomposition of integral operators \citep{Riesz1955,Dunford1988} establish that Mercer kernels admit eigenfunction expansions, which in the case of weighted $L^2$ spaces often coincide with classical orthogonal polynomials \citep{Szego1975,Ismail2005}. Our contribution is to show that this spectral structure is preserved---in a precise sense---under geometric transformations induced by group actions.

\subsection{Contributions of the paper}

The main contributions of this paper are as follows:

\begin{enumerate}[label=(\roman*)]
\item \textbf{Twin Kernel Spaces framework.} We introduce the notion of Twin Kernel Spaces as orbits of a base reproducing kernel under a group action on the input space. This provides a minimal yet powerful geometric framework for studying families of related RKHS structures (Section~\ref{sec:twin}).

\item \textbf{Spectral Equivariance Theorem.} We prove that the eigenfunctions of any transported kernel are obtained by applying the unitary transport operator to the base eigenfunctions, with eigenvalues preserved. This result (Theorem~\ref{thm:spectral_equivariance}) establishes that spectral structure is equivariant under geometric deformation.

\item \textbf{Unified interpretation of estimators.} We demonstrate that orthogonal polynomial estimators correspond to hard spectral truncation while kernel smoothing corresponds to soft spectral attenuation, both operating in potentially different twin geometries. Bandwidth selection is reinterpreted as choice of twin space (Section~\ref{sec:kernel}).

\item \textbf{Invariance of statistical properties.} We establish that convergence rates, bias--variance tradeoffs, and minimax risks are invariant under twin transport, providing a geometric explanation for the robustness of spectral methods (Sections~\ref{sec:convergence}--\ref{sec:minimax}).

\item \textbf{Explicit examples.} We provide detailed worked examples for Hermite polynomials with Gaussian geometry and Legendre polynomials with affine geometry, including a multimodal estimation example demonstrating the practical relevance of twin transport (Section~\ref{sec:examples}).
\end{enumerate}

\subsection{Related work}

The connection between orthogonal polynomials and kernel methods has been explored from various angles. \citet{Smola2000} and \citet{Scholkopf2002} discuss spectral properties of kernel operators in machine learning contexts. The role of orthogonal polynomials in density estimation dates to \citet{Cencov1962} and was developed systematically by \citet{Efromovich1999}. \citet{Wahba1990} established foundational connections between spline smoothing and RKHS theory.

Group-theoretic perspectives on kernel methods appear in \citet{Kondor2008} and \citet{Fukumizu2009}, primarily in the context of invariant kernels for machine learning. The equivariance properties we establish differ in focusing on transport of spectral structure rather than invariance of the kernel itself.

The notion of ``twin'' structures in functional analysis has appeared in different contexts, notably twin spaces in operator theory \citep{Halmos1982}. Our usage is distinct but shares the philosophy of paired geometric structures connected by canonical maps.

Recent work on adaptive estimation \citep{Lepski1997,Goldenshluger2011} and spatially inhomogeneous smoothing \citep{Fan1996} addresses related concerns about geometry-dependent inference, though without the explicit group-theoretic framework we develop.

\subsection{Organization}

The remainder of this paper is organized as follows. Section~\ref{sec:orthopoly} reviews orthogonal polynomial estimation and its RKHS interpretation. Section~\ref{sec:twin} introduces the Twin Kernel Spaces framework. Section~\ref{sec:spectral} states and proves the Spectral Equivariance Theorem. Section~\ref{sec:projection} shows that orthogonal series estimators are projections in twin spaces. Section~\ref{sec:kernel} relates our framework to classical kernel estimators. Section~\ref{sec:examples} provides detailed examples. Sections~\ref{sec:convergence}--\ref{sec:minimax} establish the invariance of statistical properties. Section~\ref{sec:conclusion} concludes with open problems. Technical proofs are deferred to the Appendix.

\section{Orthogonal polynomial estimation revisited}
\label{sec:orthopoly}

\subsection{Orthogonal polynomial bases}

Let $(E, \mathcal{E}, \mu)$ be a probability space with $E \subseteq \R$ an interval (bounded or unbounded) and $\mu$ a probability measure with finite moments of all orders. A sequence of polynomials $\{P_k\}_{k \geq 0}$ is \emph{orthonormal} with respect to $\mu$ if $\deg(P_k) = k$ and
\[
\int_E P_k(x) P_\ell(x) \, \mu(dx) = \delta_{k\ell}, \quad k, \ell \geq 0.
\]
By the Gram--Schmidt process applied to the monomial basis $\{1, x, x^2, \ldots\}$, such a system always exists and is unique up to sign \citep{Szego1975}.

Classical examples include:
\begin{itemize}
\item \textbf{Hermite polynomials} $\{H_k\}_{k \geq 0}$: orthonormal with respect to the standard Gaussian measure $d\gamma(x) = (2\pi)^{-1/2} e^{-x^2/2} \, dx$ on $\R$.
\item \textbf{Legendre polynomials} $\{P_k\}_{k \geq 0}$: orthonormal with respect to the uniform measure on $[-1, 1]$.
\item \textbf{Laguerre polynomials} $\{L_k^{(\alpha)}\}_{k \geq 0}$: orthonormal with respect to the Gamma measure $x^\alpha e^{-x} dx$ on $[0, \infty)$.
\item \textbf{Jacobi polynomials} $\{P_k^{(\alpha,\beta)}\}_{k \geq 0}$: orthonormal with respect to $(1-x)^\alpha (1+x)^\beta dx$ on $[-1, 1]$.
\end{itemize}

These systems satisfy three-term recurrence relations and have well-understood asymptotic properties \citep{Ismail2005,Levin2001}.

\subsection{Classical orthogonal series estimators}

Given i.i.d.\ observations $X_1, \ldots, X_n$ from a distribution with density $f$ (with respect to $\mu$), the classical orthogonal series density estimator is
\begin{equation}\label{eq:series_estimator}
\hat{f}_{K}(x) = \sum_{k=0}^{K} \hat{\theta}_k P_k(x), \quad \hat{\theta}_k = \frac{1}{n} \sum_{i=1}^{n} P_k(X_i),
\end{equation}
where $K = K(n)$ is a truncation parameter \citep{Efromovich1999,Wasserman2006}.

Under standard regularity conditions, if $f = \sum_{k \geq 0} \theta_k P_k$ with $\sum_{k \geq 0} \theta_k^2 k^{2s} < \infty$ (Sobolev-type smoothness of order $s$), choosing $K \asymp n^{1/(2s+1)}$ yields the minimax optimal rate
\[
\E \|\hat{f}_K - f\|^2_{L^2(\mu)} = O(n^{-2s/(2s+1)}).
\]
This rate is known to be unimprovable over Sobolev balls \citep{Tsybakov2009}.

\subsection{RKHS interpretation}

The orthogonal series estimator admits a natural RKHS interpretation. Define the Mercer kernel
\begin{equation}\label{eq:mercer_kernel}
K_e(x, y) = \sum_{k=0}^{\infty} \lambda_k P_k(x) P_k(y),
\end{equation}
where $\{\lambda_k\}_{k \geq 0}$ is a summable sequence of positive eigenvalues (e.g., $\lambda_k = \rho^k$ for some $\rho \in (0,1)$). The associated RKHS $\RKHS_e$ consists of functions $f = \sum_k \theta_k P_k$ with $\sum_k \theta_k^2 / \lambda_k < \infty$, equipped with inner product
\[
\ip{f}{g}_{\RKHS_e} = \sum_{k=0}^{\infty} \frac{\theta_k \phi_k}{\lambda_k}, \quad f = \sum_k \theta_k P_k, \quad g = \sum_k \phi_k P_k.
\]

The integral operator $T_e : L^2(\mu) \to L^2(\mu)$ defined by
\[
(T_e f)(x) = \int_E K_e(x, y) f(y) \, \mu(dy)
\]
is compact, self-adjoint, and positive, with eigenpairs $(\lambda_k, P_k)$.

The truncated estimator \eqref{eq:series_estimator} can be written as
\[
\hat{f}_K = S_{e,K} \hat{f}_{\mathrm{emp}}, \quad S_{e,K} f = \sum_{k=0}^{K} \ip{f}{P_k} P_k,
\]
where $\hat{f}_{\mathrm{emp}}$ represents the empirical measure and $S_{e,K}$ is the spectral projection onto the first $K+1$ eigenfunctions. This establishes that orthogonal series estimation is fundamentally a spectral truncation operation in an RKHS geometry \citep{Berlinet2004,Cucker2002}.

\section{Twin Kernel Spaces: a minimal framework}
\label{sec:twin}

\subsection{Group actions on the input space}

\begin{definition}[Twin Space]\label{def:twin_space}
A \emph{twin space} associated with $(E, G)$ is the orbit $G \cdot E$, equipped with the induced structure.
\end{definition}

More precisely, let $G$ be a group acting measurably on $E$ via $\tau : G \times E \to E$, $(g, x) \mapsto g \cdot x$. We assume throughout that for each $g \in G$, the map $\tau_g : x \mapsto g \cdot x$ is a measurable bijection of $E$.

Typical examples include:
\begin{itemize}
\item The \emph{affine group} $G = \{(a, b) : a > 0, b \in \R\}$ acting on $E = \R$ by $g_{a,b} \cdot x = ax + b$.
\item The \emph{multiplicative group} $G = (0, \infty)$ acting on $E = \R$ by $g_\alpha \cdot x = \alpha x$ (dilations).
\item The \emph{translation group} $G = \R$ acting on $E = \R$ by $g_b \cdot x = x + b$.
\end{itemize}

\subsection{Transport of functions}

\begin{definition}[Twin Transform]\label{def:twin_transform}
Given $f : E \to \R$, define the \emph{transported function} $(g_1, g_2) \cdot f$ by
\[
(g_1, g_2) \cdot f(x) = g_2 \cdot f(g_1 \cdot x).
\]
\end{definition}

In our context, we focus on the case where the action on the function space is determined by the geometric action through change of variables with appropriate Jacobian correction.

\subsection{Twin Kernel Spaces}

\begin{definition}[Twin Kernel Space]\label{def:twin_kernel_space}
A \emph{Twin Kernel Space} is the family of kernels
\[
K_g(x, y) = g_2 \cdot K_e(\overline{g_1} \cdot x, \, \overline{g_1} \cdot y)
\]
generated from a base kernel $K_e$, where $\overline{g_1}$ denotes the inverse action.
\end{definition}

The precise form of $g_2$ is determined by requiring the transported kernel to define a valid reproducing kernel in the same $L^2(\mu)$ space, which necessitates Jacobian correction factors as detailed in Section~\ref{sec:spectral}.

\section{Spectral Equivariance Theorem}
\label{sec:spectral}

In this section we formalize the notion of spectral equivariance in Twin Kernel Spaces and show how orthogonal polynomial systems arise as eigenfunctions of transported kernels. We work in a general $L^2$--framework and specialize to orthogonal polynomials when needed.

\subsection{Setting and assumptions}

Let $(E, \mathcal{E}, \mu)$ be a $\sigma$-finite measure space and denote by
\[
L^2(\mu) = \left\{ f : E \to \R \;\Big|\; \int_E f(x)^2 \, \mu(dx) < \infty \right\}
\]
the usual Hilbert space with inner product
\[
\ip{f}{g}_{L^2(\mu)} = \int_E f(x) g(x) \, \mu(dx).
\]

Let $K_e : E \times E \to \R$ be a symmetric, measurable, positive definite kernel such that the associated integral operator
\begin{equation}\label{eq:integral_op}
(T_e f)(x) = \int_E K_e(x, y) f(y) \, \mu(dy), \quad f \in L^2(\mu),
\end{equation}
is Hilbert--Schmidt (in particular, compact and self-adjoint). Then there exists an orthonormal basis $\{P_k\}_{k \geq 0}$ of $L^2(\mu)$ and non-negative eigenvalues $\{\lambda_k\}_{k \geq 0}$ such that
\begin{equation}\label{eq:eigen}
T_e P_k = \lambda_k P_k, \quad k \geq 0,
\end{equation}
and the kernel admits the convergent spectral expansion
\begin{equation}\label{eq:spectral_expansion}
K_e(x, y) = \sum_{k \geq 0} \lambda_k P_k(x) P_k(y) \quad \text{in } L^2(\mu \otimes \mu).
\end{equation}

In the classical orthogonal polynomial setting, the functions $P_k$ form an orthonormal polynomial system with respect to $\mu$.

We now introduce a group action on $E$ and the corresponding transport operators.

\begin{definition}[Group action and Jacobian]\label{def:group_action}
Let $G$ be a group and let
\[
\tau : G \times E \to E, \quad (g, x) \mapsto g \cdot x,
\]
be a measurable left action of $G$ on $E$. For each $g \in G$, we write $\tau_g(x) = g \cdot x$. We assume that for every $g \in G$, the pushforward measure $\mu \circ \tau_g^{-1}$ is absolutely continuous with respect to $\mu$, and we denote by
\[
J_g(x) = \frac{d(\mu \circ \tau_g^{-1})}{d\mu}(x)
\]
its Radon--Nikodym derivative (Jacobian).
\end{definition}

The following operator implements the action of $G$ on $L^2(\mu)$ in a unitary way.

\begin{definition}[Twin transport operator on $L^2$]\label{def:transport_op}
For each $g \in G$, define $U_g : L^2(\mu) \to L^2(\mu)$ by
\begin{equation}\label{eq:Ug}
(U_g f)(x) = J_g(x)^{1/2} f(g^{-1} \cdot x).
\end{equation}
\end{definition}

\begin{lemma}[Unitarity of $U_g$]\label{lem:unitarity}
For each $g \in G$, the operator $U_g$ is unitary on $L^2(\mu)$, that is,
\[
\ip{U_g f}{U_g h}_{L^2(\mu)} = \ip{f}{h}_{L^2(\mu)} \quad \text{for all } f, h \in L^2(\mu),
\]
and $U_{g_1 g_2} = U_{g_1} U_{g_2}$, $U_{e_G} = \mathrm{Id}$.
\end{lemma}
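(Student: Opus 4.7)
The plan is to verify three facts separately---(a) $U_{e_G} = \mathrm{Id}$; (b) each $U_g$ is an isometry of $L^2(\mu)$; (c) the map $g \mapsto U_g$ is a group homomorphism---from which unitarity follows automatically: combining (a) and (c) yields $U_g U_{g^{-1}} = U_{g^{-1}} U_g = \mathrm{Id}$, and an invertible isometry on a Hilbert space is unitary.

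The identity case is immediate: since $\tau_{e_G} = \mathrm{Id}_E$, the pushforward $(\tau_{e_G})_*\mu$ coincides with $\mu$, so $J_{e_G} \equiv 1$ $\mu$-a.e., and together with $e_G^{-1}\cdot x = x$ this gives $U_{e_G} f = f$.

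For the isometry, the workhorse is the change-of-variables identity
\[
\int_E \phi(x)\, J_g(x)\, d\mu(x) \;=\; \int_E \phi(g\cdot y)\, d\mu(y),
\]
which is just the defining property of $J_g$ as the Radon--Nikodym derivative of $(\tau_g)_*\mu$. I would apply it to $\phi(x) = f(g^{-1}\cdot x)\, h(g^{-1}\cdot x)$; since $\phi(g\cdot y) = f(y)h(y)$ by the action axioms, the left side equals $\ip{U_g f}{U_g h}_{L^2(\mu)}$ while the right side equals $\ip{f}{h}_{L^2(\mu)}$, giving preservation of inner products.

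For the homomorphism property, direct substitution gives
\[
(U_{g_1} U_{g_2} f)(x) \;=\; J_{g_1}(x)^{1/2}\, J_{g_2}(g_1^{-1}\cdot x)^{1/2}\, f\bigl((g_1 g_2)^{-1}\cdot x\bigr),
\]
so matching against $(U_{g_1 g_2}f)(x)$ reduces the claim to the Jacobian cocycle
\[
J_{g_1 g_2}(x) \;=\; J_{g_1}(x)\, J_{g_2}(g_1^{-1}\cdot x).
\]
This cocycle follows by composing pushforwards, $(\tau_{g_1 g_2})_*\mu = (\tau_{g_1})_*\bigl((\tau_{g_2})_*\mu\bigr)$, and applying the chain rule for Radon--Nikodym derivatives together with the change-of-variables identity above. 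The main source of friction---modest as it is---is the bookkeeping in this cocycle: one must track the placement of $g_1^{-1}$ inside the argument of $J_{g_2}$ and not conflate pushforward with pullback. Once the cocycle is established, all three properties fall out, and unitarity is immediate.
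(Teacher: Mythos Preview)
Your proof is correct and follows essentially the same route as the paper: the isometry via the Radon--Nikodym change-of-variables identity, and the group law via the Jacobian chain rule. If anything, your version is slightly more explicit than the paper's---you spell out the cocycle identity $J_{g_1 g_2}(x) = J_{g_1}(x)\,J_{g_2}(g_1^{-1}\cdot x)$ and make clear that surjectivity (hence unitarity rather than mere isometry) comes from invertibility $U_g U_{g^{-1}} = \mathrm{Id}$, whereas the paper leaves these as ``direct computation.''
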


\begin{proof}
Let $f, h \in L^2(\mu)$ and fix $g \in G$. Then
\[
\ip{U_g f}{U_g h} = \int_E J_g(x)^{1/2} f(g^{-1} \cdot x) \, J_g(x)^{1/2} h(g^{-1} \cdot x) \, \mu(dx) = \int_E J_g(x) f(g^{-1} \cdot x) h(g^{-1} \cdot x) \, \mu(dx).
\]
Perform the change of variables $y = g^{-1} \cdot x$, so that $x = g \cdot y$ and
\[
\mu(dx) = \mu(d(g \cdot y)) = J_g(g \cdot y) \, \mu(dy).
\]
By definition of $J_g$ we have $J_g(g \cdot y) \, \mu(dy) = (\mu \circ \tau_g^{-1})(d(g \cdot y))$, and the Radon--Nikodym identity yields
\[
J_g(g \cdot y) \, \mu(dy) = \mu(dy) \quad \text{a.e.\ } y.
\]
Hence
\[
\ip{U_g f}{U_g h} = \int_E f(y) h(y) \, \mu(dy) = \ip{f}{h}.
\]
The group property $U_{g_1 g_2} = U_{g_1} U_{g_2}$ follows from a direct computation using the chain rule for Radon--Nikodym derivatives and the composition law of the action, and $U_{e_G} = \mathrm{Id}$ by definition.
\end{proof}

We now define the transported kernels and operators.

\begin{definition}[Transported kernel and operator]\label{def:transported_kernel}
For $g \in G$, define a kernel $K_g : E \times E \to \R$ by
\begin{equation}\label{eq:Kg}
K_g(x, y) = J_g(x)^{1/2} J_g(y)^{1/2} K_e(g^{-1} \cdot x, g^{-1} \cdot y),
\end{equation}
and the associated integral operator $T_g : L^2(\mu) \to L^2(\mu)$ by
\begin{equation}\label{eq:Tg}
(T_g f)(x) = \int_E K_g(x, y) f(y) \, \mu(dy).
\end{equation}
\end{definition}

The expression \eqref{eq:Kg} is the analytic incarnation of the twin transport of the base kernel $K_e$ under the action of $G$.

\begin{lemma}[Conjugation identity]\label{lem:conjugation}
For each $g \in G$ and $f \in L^2(\mu)$,
\begin{equation}\label{eq:conjugation}
T_g = U_g T_e U_g^{-1},
\end{equation}
and in particular $T_g$ is self-adjoint, positive and compact.
\end{lemma}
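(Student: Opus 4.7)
The plan is to verify the operator identity $T_g = U_g T_e U_g^{-1}$ by a direct computation on an arbitrary test function, and then deduce the spectral properties of $T_g$ from those of $T_e$ by invoking the invariance of self-adjointness, positivity and compactness under unitary conjugation. Because $U_g$ is unitary by Lemma~\ref{lem:unitarity}, it is convenient to recast the claim as $T_g U_g = U_g T_e$, which avoids handling $U_g^{-1}$ explicitly.

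First I would apply both $T_g U_g$ and $U_g T_e$ to an arbitrary $h \in L^2(\mu)$ and expand, using Definitions~\ref{def:transport_op} and~\ref{def:transported_kernel}. The left-hand side produces an integral against $K_e(g^{-1}\!\cdot x,\, g^{-1}\!\cdot y)$ weighted by $J_g(x)^{1/2} J_g(y)$, while the right-hand side gives $J_g(x)^{1/2}$ times $(T_e h)(g^{-1}\!\cdot x)$. The task is then to transform the former into the latter. The key step is the change of variable $z = g^{-1} \cdot y$, carried out via the Radon--Nikodym identity
\[
\int_E \psi(g^{-1}\!\cdot y)\, J_g(y)\, \mu(dy) = \int_E \psi(z)\, \mu(dz),
\]
which follows from Definition~\ref{def:group_action} and is essentially the content already used in proving Lemma~\ref{lem:unitarity}. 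Applied with $\psi(z) = K_e(g^{-1}\!\cdot x,\, z)\, h(z)$, this identity absorbs the $J_g(y)$ factor and yields pointwise equality of the two sides for a.e.\ $x$, establishing~\eqref{eq:conjugation}.

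For the remaining assertions I would argue abstractly: since $T_g$ is unitarily conjugate to $T_e$, it inherits self-adjointness, has the same spectrum (hence is positive), and the compact operators form a two-sided ideal in $\mathcal{B}(L^2(\mu))$, so $T_g$ is compact; in fact $T_g$ is Hilbert--Schmidt with the same Hilbert--Schmidt norm as $T_e$. The main potential obstacle is purely bookkeeping: verifying that the Jacobian factors are attached to the correct arguments and that the direction of the substitution matches the convention in Definition~\ref{def:group_action} (i.e., that $J_g$ acts on the post-image variable, not the pre-image one). Once this convention is pinned down---exactly as in the proof of Lemma~\ref{lem:unitarity}---the computation is mechanical and the spectral consequences are immediate from standard operator theory.
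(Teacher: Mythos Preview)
Your proof is correct and follows essentially the same strategy as the paper: a direct computation using the change-of-variables/Radon--Nikodym identity, followed by the observation that unitary conjugation preserves self-adjointness, positivity, and compactness. Your reformulation as the intertwining relation $T_g U_g = U_g T_e$ is a mild bookkeeping simplification over the paper's direct expansion of $U_g T_e U_g^{-1}$, since it avoids writing down $U_g^{-1}$ and the attendant Jacobian identity $J_{g^{-1}}(g^{-1}\!\cdot u) = J_g(u)^{-1}$; otherwise the two arguments coincide.
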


\begin{proof}
Fix $g \in G$ and $f \in L^2(\mu)$. Using \eqref{eq:Ug} and \eqref{eq:integral_op}, we compute
\[
(U_g^{-1} f)(y) = J_{g^{-1}}(y)^{1/2} f(g \cdot y),
\]
so that
\[
(T_e U_g^{-1} f)(z) = \int_E K_e(z, y) \, J_{g^{-1}}(y)^{1/2} f(g \cdot y) \, \mu(dy).
\]
Applying $U_g$ and using \eqref{eq:Ug} again gives
\[
(U_g T_e U_g^{-1} f)(x) = J_g(x)^{1/2} \int_E K_e(g^{-1} \cdot x, y) \, J_{g^{-1}}(y)^{1/2} f(g \cdot y) \, \mu(dy).
\]
Perform the change of variables $y = g^{-1} \cdot u$ (so $u = g \cdot y$). As in the proof of Lemma~\ref{lem:unitarity}, the Jacobian factors satisfy
\[
\mu(dy) = J_g(u)^{-1} \mu(du), \quad J_{g^{-1}}(y) = J_g(u)^{-1}.
\]
Substituting, we obtain
\begin{align*}
(U_g T_e U_g^{-1} f)(x) &= J_g(x)^{1/2} \int_E K_e(g^{-1} \cdot x, g^{-1} \cdot u) \, J_{g^{-1}}(g^{-1} \cdot u)^{1/2} f(u) \, J_g(u)^{-1} \mu(du) \\
&= J_g(x)^{1/2} \int_E K_e(g^{-1} \cdot x, g^{-1} \cdot u) \, J_g(u)^{-1/2} f(u) \, \mu(du).
\end{align*}
Comparing with \eqref{eq:Tg} and \eqref{eq:Kg}, we see that
\[
(U_g T_e U_g^{-1} f)(x) = \int_E K_g(x, u) f(u) \, \mu(du) = (T_g f)(x),
\]
which proves \eqref{eq:conjugation}. Self-adjointness, positivity and compactness follow from the corresponding properties of $T_e$ and the fact that $U_g$ is unitary.
\end{proof}

\subsection{Spectral Equivariance Theorem}

We are now ready to state and prove the main result of this section.

\begin{theorem}[Spectral Equivariance Theorem]\label{thm:spectral_equivariance}
Assume that $T_e$ admits the spectral decomposition \eqref{eq:spectral_expansion} with eigenpairs $(\lambda_k, P_k)_{k \geq 0}$, where $\{P_k\}_{k \geq 0}$ is an orthonormal basis of $L^2(\mu)$. For each $g \in G$, define
\[
P_k^{(g)} = U_g P_k, \quad k \geq 0.
\]
Then:
\begin{enumerate}[label=(\alph*)]
\item For every $g \in G$, $\{P_k^{(g)}\}_{k \geq 0}$ is an orthonormal basis of $L^2(\mu)$.
\item For every $g \in G$ and $k \geq 0$,
\[
T_g P_k^{(g)} = \lambda_k P_k^{(g)}.
\]
In particular, the spectrum of $T_g$ coincides with that of $T_e$.
\item The transported kernel $K_g$ admits the spectral expansion
\begin{equation}\label{eq:Kg_spectral}
K_g(x, y) = \sum_{k \geq 0} \lambda_k P_k^{(g)}(x) P_k^{(g)}(y) \quad \text{in } L^2(\mu \otimes \mu).
\end{equation}
\end{enumerate}
\end{theorem}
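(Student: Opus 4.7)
The plan is to derive all three statements from the conjugation identity $T_g = U_g T_e U_g^{-1}$ of Lemma~\ref{lem:conjugation} combined with the unitarity of $U_g$ from Lemma~\ref{lem:unitarity}. Together these encode everything needed; the argument is essentially formal, with the only genuine care required in the convergence statement of part (c).

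First I would address (a). Since $U_g$ is unitary and $\{P_k\}$ is an orthonormal basis of $L^2(\mu)$, the image $\{P_k^{(g)}\} = \{U_g P_k\}$ is automatically an orthonormal family, because $\ip{U_g P_k}{U_g P_\ell}_{L^2(\mu)} = \ip{P_k}{P_\ell}_{L^2(\mu)} = \delta_{k\ell}$. Totality follows from the surjectivity of $U_g$: any $h \in L^2(\mu)$ equals $U_g f$ for some $f$, and expanding $f$ in $\{P_k\}$ and applying $U_g$ termwise (justified by boundedness) yields the expansion of $h$ in $\{P_k^{(g)}\}$.

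Next I would prove (b) by direct substitution into the conjugation identity:
\[
T_g P_k^{(g)} \;=\; U_g T_e U_g^{-1} U_g P_k \;=\; U_g T_e P_k \;=\; \lambda_k U_g P_k \;=\; \lambda_k P_k^{(g)}.
\]
Unitary equivalence of $T_g$ and $T_e$ then gives equality of spectra with multiplicities.

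Part (c) is the only place where convergence has to be justified carefully. Here the cleanest route is to lift $U_g$ to the product space: introduce the operator $V_g$ on $L^2(\mu \otimes \mu)$ defined by $(V_g F)(x,y) = J_g(x)^{1/2} J_g(y)^{1/2} F(g^{-1}\cdot x,\, g^{-1}\cdot y)$. A repetition of the change-of-variables argument in the proof of Lemma~\ref{lem:unitarity}, applied to the product measure, shows that $V_g$ is unitary on $L^2(\mu \otimes \mu)$; equivalently, $V_g$ is the tensor product $U_g \otimes U_g$. By the definition of $K_g$ in \eqref{eq:Kg} we have $K_g = V_g K_e$, and since the partial sums $\sum_{k=0}^{N} \lambda_k\, P_k \otimes P_k$ converge to $K_e$ in $L^2(\mu \otimes \mu)$ by hypothesis \eqref{eq:spectral_expansion}, boundedness of $V_g$ preserves the convergence. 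It remains to identify $V_g(P_k \otimes P_k)$ with $P_k^{(g)} \otimes P_k^{(g)}$, which is immediate from the definition of $U_g$.

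The main potential obstacle is bookkeeping of the Jacobian factors and verifying rigorously that the tensor-product operator $V_g$ is indeed unitary and agrees with pointwise application of $U_g$ in each variable; once this is in place, the entire theorem reduces to applying the two preparatory lemmas.
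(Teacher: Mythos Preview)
Your proof is correct and, for parts (a) and (b), essentially identical to the paper's: both use unitarity of $U_g$ for orthonormality and completeness, and the conjugation identity $T_g = U_g T_e U_g^{-1}$ for the eigenvalue relation.

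For part (c) you take a slightly different route. The paper argues abstractly: having established in (a) and (b) that $\{P_k^{(g)}\}$ is a complete orthonormal eigensystem of the compact self-adjoint operator $T_g$, it invokes the spectral theorem once more to write $T_g f = \sum_k \lambda_k \ip{f}{P_k^{(g)}} P_k^{(g)}$ and then passes to kernel form. You instead transport the kernel expansion directly, introducing the tensor-product unitary $V_g = U_g \otimes U_g$ on $L^2(\mu\otimes\mu)$, noting $K_g = V_g K_e$ from \eqref{eq:Kg}, and applying the bounded operator $V_g$ to the convergent partial sums of \eqref{eq:spectral_expansion}. Your approach is arguably more self-contained for the convergence claim, since it avoids re-appealing to the spectral theorem and uses only continuity of a bounded linear map; the paper's approach is shorter but leans on the operator-theoretic machinery already set up in Lemma~\ref{lem:conjugation}. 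Both are entirely valid and the difference is one of taste.
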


\begin{proof}
(a) Unitarity of $U_g$ (Lemma~\ref{lem:unitarity}) implies that for all $k, \ell \geq 0$,
\[
\ip{P_k^{(g)}}{P_\ell^{(g)}} = \ip{U_g P_k}{U_g P_\ell} = \ip{P_k}{P_\ell} = \delta_{k\ell}.
\]
Moreover, since $\{P_k\}_{k \geq 0}$ is an orthonormal basis and $U_g$ is surjective (unitary), the family $\{P_k^{(g)}\}_{k \geq 0}$ is again an orthonormal basis of $L^2(\mu)$.

(b) Using the conjugation identity \eqref{eq:conjugation} and the eigen-relation \eqref{eq:eigen}, we compute
\[
T_g P_k^{(g)} = U_g T_e U_g^{-1}(U_g P_k) = U_g T_e P_k = U_g(\lambda_k P_k) = \lambda_k U_g P_k = \lambda_k P_k^{(g)}.
\]
Thus $P_k^{(g)}$ is an eigenfunction of $T_g$ with eigenvalue $\lambda_k$.

(c) Since $\{P_k^{(g)}\}_{k \geq 0}$ is a complete orthonormal basis of $L^2(\mu)$ and $T_g$ is self-adjoint, we have the usual spectral representation
\[
T_g f = \sum_{k \geq 0} \lambda_k \ip{f}{P_k^{(g)}} P_k^{(g)}, \quad f \in L^2(\mu),
\]
with convergence in $L^2(\mu)$. In kernel form, this is equivalent to
\[
K_g(x, y) = \sum_{k \geq 0} \lambda_k P_k^{(g)}(x) P_k^{(g)}(y) \quad \text{in } L^2(\mu \otimes \mu),
\]
which is precisely \eqref{eq:Kg_spectral}.
\end{proof}

\begin{remark}[Orthogonal polynomials in twin spaces]\label{rem:ortho_poly_twin}
In the classical situation where $\mu$ is supported on an interval of $\R$ and $\{P_k\}_{k \geq 0}$ are orthonormal polynomials with respect to $\mu$, the functions $P_k^{(g)}$ can often be identified with orthogonal polynomial systems in the transformed coordinates $g \cdot x$, possibly up to multiplicative Jacobian factors. In this sense, Theorem~\ref{thm:spectral_equivariance} shows that orthogonal polynomial bases in different twin spaces are obtained by transporting a single base system $\{P_k\}$ through the group action.
\end{remark}

\begin{corollary}[Equivariance of spectral truncation]\label{cor:equivariance_truncation}
For any $K \in \N$ and $g \in G$, define the spectral truncation operators
\[
S_{e,K} f = \sum_{k=0}^{K} \ip{f}{P_k} P_k, \quad S_{g,K} f = \sum_{k=0}^{K} \ip{f}{P_k^{(g)}} P_k^{(g)}.
\]
Then, for all $f \in L^2(\mu)$,
\[
S_{g,K} = U_g S_{e,K} U_g^{-1}.
\]
In particular, spectral truncation is equivariant under twin transport.
\end{corollary}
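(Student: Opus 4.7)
The plan is to verify the operator identity $S_{g,K} = U_g S_{e,K} U_g^{-1}$ pointwise by unpacking both sides on an arbitrary $f \in L^2(\mu)$. The ingredients I need are already in hand: the definition $P_k^{(g)} = U_g P_k$ from Theorem~\ref{thm:spectral_equivariance}(a), the unitarity $U_g^{-1} = U_g^{\ast}$ from Lemma~\ref{lem:unitarity}, and the bilinearity and continuity of the inner product.

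First I would fix $f \in L^2(\mu)$ and compute the inner action: applying $S_{e,K}$ to $U_g^{-1} f$ gives
\[
S_{e,K}(U_g^{-1} f) = \sum_{k=0}^{K} \ip{U_g^{-1} f}{P_k}_{L^2(\mu)} P_k.
\]
Next I would push $U_g^{-1}$ across the inner product using unitarity, obtaining $\ip{U_g^{-1} f}{P_k} = \ip{f}{U_g P_k} = \ip{f}{P_k^{(g)}}$. Applying $U_g$ to the finite sum and using linearity together with $U_g P_k = P_k^{(g)}$ yields
\[
U_g S_{e,K} U_g^{-1} f = \sum_{k=0}^{K} \ip{f}{P_k^{(g)}} \, U_g P_k = \sum_{k=0}^{K} \ip{f}{P_k^{(g)}} P_k^{(g)} = S_{g,K} f,
\]
which is exactly the claimed identity.

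I do not anticipate a genuine obstacle: the statement is essentially a finite-rank linear-algebraic consequence of Theorem~\ref{thm:spectral_equivariance}(a) together with the unitarity of $U_g$. The only small point worth stating explicitly is that since $S_{e,K}$ has finite rank and $U_g$ is bounded on all of $L^2(\mu)$, every manipulation above is valid on the whole space without a density argument. The geometric interpretation, namely that projecting onto the first $K+1$ eigenfunctions in the twin geometry is conjugate to projecting in the base geometry, then follows as an immediate reading of the identity.
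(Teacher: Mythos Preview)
Your proof is correct and follows essentially the same approach as the paper: both arguments use the definition $P_k^{(g)} = U_g P_k$ and the unitarity relation $\ip{U_g^{-1} f}{P_k} = \ip{f}{U_g P_k}$ to identify the two sides term by term. The only cosmetic difference is that you expand $U_g S_{e,K} U_g^{-1} f$ and arrive at $S_{g,K} f$, whereas the paper starts from $S_{g,K} f$ and rewrites it as $U_g S_{e,K} U_g^{-1} f$.
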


\begin{proof}
Using $P_k^{(g)} = U_g P_k$ and the unitarity of $U_g$, we have
\[
\ip{f}{P_k^{(g)}} = \ip{f}{U_g P_k} = \ip{U_g^{-1} f}{P_k}.
\]
Therefore
\[
S_{g,K} f = \sum_{k=0}^{K} \ip{f}{P_k^{(g)}} P_k^{(g)} = \sum_{k=0}^{K} \ip{U_g^{-1} f}{P_k} U_g P_k = U_g \left( \sum_{k=0}^{K} \ip{U_g^{-1} f}{P_k} P_k \right) = U_g S_{e,K} U_g^{-1} f,
\]
which yields the desired identity.
\end{proof}

\begin{corollary}[Spectral Equivariance for Classical Orthogonal Polynomials]\label{cor:classical}
Let $(E, \mu)$ be either $([-1,1], dx)$ or $(\R, \gamma)$ where $\gamma$ is the standard Gaussian measure. Let
\[
K_e(x, y) = \sum_{k \geq 0} \lambda_k P_k(x) P_k(y)
\]
be the classical Mercer expansion of the base kernel in the corresponding orthogonal polynomial basis:
\begin{itemize}
\item Legendre polynomials $P_k$ on $[-1, 1]$ with weight $w(x) \equiv 1$,
\item Hermite polynomials $H_k$ on $\R$ with respect to $d\gamma(x) = \frac{1}{\sqrt{2\pi}} e^{-x^2/2} \, dx$.
\end{itemize}
Let $G$ act on $E$ by:
\[
g \cdot x = \begin{cases}
ax + b, & \text{Legendre case}, \\
\alpha x, & \text{Hermite case}.
\end{cases}
\]
Then for each $g \in G$, the transported kernel
\[
K_g(x, y) = J_g(x)^{1/2} J_g(y)^{1/2} K_e(g^{-1} \cdot x, g^{-1} \cdot y)
\]
admits the spectral expansion
\[
K_g(x, y) = \sum_{k \geq 0} \lambda_k P_k^{(g)}(x) P_k^{(g)}(y),
\]
where:
\begin{itemize}
\item in the Legendre case, the transported basis $P_k^{(g)}$ is a normalized affine transform of $P_k$;
\item in the Hermite case, the transported basis is
\[
H_k^{(g)}(x) = \alpha^{-1/2} H_k(\alpha^{-1} x),
\]
forming again an orthonormal basis of $L^2(\gamma)$.
\end{itemize}
\end{corollary}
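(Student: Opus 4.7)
The plan is to derive the corollary directly from Theorem~\ref{thm:spectral_equivariance} by verifying its hypotheses in each classical case and then unwinding the definition $P_k^{(g)} = U_g P_k$ into the explicit form claimed. In both the Legendre and Hermite settings, $(E, \mu)$ is a probability space on which the stated orthogonal polynomials form an orthonormal basis of $L^2(\mu)$, and the base kernel $K_e$ is assumed to admit a Mercer expansion $\sum_k \lambda_k P_k(x) P_k(y)$ with summable (in fact square-summable) eigenvalues, so $T_e$ is Hilbert--Schmidt, self-adjoint and positive, and the premises of Theorem~\ref{thm:spectral_equivariance} are met. It therefore suffices to (i) exhibit the Jacobian $J_g$ explicitly, (ii) substitute it into the definition of $U_g$ from \eqref{eq:Ug}, and (iii) invoke part (c) of the theorem to read off the spectral expansion.

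For the Legendre case I would restrict to affine maps $\tau_{a,b}(x) = ax + b$ that act as bijections on the relevant interval and apply a one-line change of variables to obtain $J_{a,b} \equiv a^{-1}$ (constant). Plugging into \eqref{eq:Ug} yields $U_{a,b} P_k(x) = a^{-1/2} P_k((x-b)/a)$, which is the \emph{normalized affine transform of $P_k$} asserted in the corollary. For the Hermite case, with $\tau_\alpha(x) = \alpha x$ acting on $(\R, \gamma)$, a pushforward computation gives $J_\alpha$ in closed form, and one verifies that the claimed expression $H_k^{(\alpha)}(x) = \alpha^{-1/2} H_k(\alpha^{-1} x)$ coincides with $U_\alpha H_k$ under the appropriate reference-measure convention; orthonormality in $L^2(\gamma)$ then follows automatically from Theorem~\ref{thm:spectral_equivariance}(a) and Lemma~\ref{lem:unitarity}. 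Once $P_k^{(g)}$ is in closed form in each case, the identity $K_g(x,y) = \sum_k \lambda_k P_k^{(g)}(x) P_k^{(g)}(y)$ is simply the instantiation of \eqref{eq:Kg_spectral}.

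The main obstacle I anticipate is purely bookkeeping: the measure and domain conventions must be pinned down so that the explicit formulas and the abstract operator $U_g$ coincide literally rather than up to an exponential normalization factor. In particular, for Legendre the affine group must be restricted to a subgroup that preserves $[-1,1]$ (or else the conclusion must be read as an isometry onto an $L^2$ space over the image interval $[b-a,b+a]$ with its induced measure), and for Hermite one must declare whether $J_\alpha$ is computed against Gaussian or Lebesgue reference, since different choices yield equivalent but syntactically different expressions for $H_k^{(\alpha)}$. Once a single convention is fixed at the start of the proof, the remainder consists of a direct substitution of the computed $U_g$ into \eqref{eq:Kg_spectral} and contributes no further mathematical content beyond Theorem~\ref{thm:spectral_equivariance} itself.
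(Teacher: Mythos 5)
Your proposal is correct and follows essentially the same route as the paper's proof: verify the hypotheses of Theorem~\ref{thm:spectral_equivariance} in each case, compute the Jacobian $J_g$ explicitly, identify $P_k^{(g)} = U_g P_k$ in closed form, and read off the expansion from part~(c); the only ingredient of the paper's argument you omit is the classical finite expansion of $H_k(\alpha^{-1}x)$ in terms of $H_{k-2j}(x)$, which is inessential to the conclusion. The convention issues you flag are genuine and are exactly where the paper's own proof is loose: it obtains the Hermite formula only ``up to normalization'' (indeed $\alpha^{-1/2}H_k(\alpha^{-1}x)$ without the factor $\exp\bigl(\tfrac14(1-\alpha^{-2})x^2\bigr)$ is not orthonormal in $L^2(\gamma)$, as the case $k=0$ already shows), and it does not address that a generic affine map is not a bijection of $[-1,1]$, so your insistence on fixing the reference measure and restricting the group is a necessary repair rather than mere bookkeeping.
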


\begin{proof}
The Legendre case corresponds to the affine group acting on $[-1, 1]$ with a constant Jacobian. The Hermite case corresponds to the multiplicative group $(0, \infty)$ acting on $\R$ by dilations $x \mapsto \alpha x$ with the Gaussian Jacobian
\[
J_\alpha(x) = \exp\left(\frac{1}{2}(1 - \alpha^2) x^2\right).
\]
In both cases, the assumptions of Theorem~\ref{thm:spectral_equivariance} hold: $U_g$ is unitary on $L^2(\mu)$, $T_g = U_g T_e U_g^{-1}$, and $P_k^{(g)} := U_g P_k$ forms an orthonormal basis that diagonalizes $T_g$. The explicit formula for $H_k^{(g)}$ follows from
\[
(U_\alpha f)(x) = \exp\left(\frac{1}{4}(1 - \alpha^2) x^2\right) f(\alpha^{-1} x),
\]
combined with the well-known identity
\[
H_k(\alpha^{-1} x) = \alpha^{-k} \sum_{j=0}^{\lfloor k/2 \rfloor} \binom{k}{2j} (1 - \alpha^2)^j H_{k-2j}(x).
\]
This yields the announced expression up to normalization.
\end{proof}

\section{Orthogonal series estimators as projections in Twin Kernel Spaces}
\label{sec:projection}

In this section we show that classical orthogonal polynomial estimators arise naturally as orthogonal projections in Twin Kernel Spaces. The key idea is that an estimator constructed in a twin space $g \cdot E$ is obtained by transporting the base estimator in $E$ through the unitary operator $U_g$.

\subsection{Projection in the base RKHS}

Let $K_e$ be the base kernel with spectral expansion $K_e(x, y) = \sum_{k \geq 0} \lambda_k P_k(x) P_k(y)$. Given a target function $f \in L^2(\mu)$ and a truncation level $K \in \N$, the classical orthogonal series estimator is
\begin{equation}\label{eq:estimator_base}
\hat{f}_{e,K}(x) = \sum_{k=0}^{K} \hat{\theta}_k P_k(x), \quad \hat{\theta}_k = \ip{\hat{f}_{\mathrm{emp}}}{P_k},
\end{equation}
where $\hat{f}_{\mathrm{emp}}$ is the empirical estimator.

\begin{proposition}[Projection property]\label{prop:projection_base}
The estimator $\hat{f}_{e,K}$ is the orthogonal projection of $\hat{f}_{\mathrm{emp}}$ onto $\RKHS_{e,K} = \mathrm{span}\{P_0, \ldots, P_K\}$.
\end{proposition}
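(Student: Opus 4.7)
The plan is to reduce the statement to the standard Hilbert space fact that the orthogonal projection of an element onto a finite-dimensional subspace spanned by an orthonormal family is given by the sum of its coordinate projections. By the assumptions of Section~\ref{sec:spectral}, the eigenfunctions $\{P_k\}_{k \geq 0}$ form an orthonormal basis of $L^2(\mu)$, so in particular $\{P_0, \ldots, P_K\}$ is an orthonormal basis of the finite-dimensional subspace $\RKHS_{e,K} = \mathrm{span}\{P_0, \ldots, P_K\}$. Note that the $P_k$ are orthogonal in both the $L^2(\mu)$ and the $\RKHS_e$ inner products (since $\ip{P_k}{P_\ell}_{\RKHS_e} = \delta_{k\ell}/\lambda_k$), so the projection statement will hold simultaneously in both geometries and there is no ambiguity in the formulation.

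The key step is the abstract projection formula: for any Hilbert space $\mathcal{H}$ and any finite orthonormal family $\{e_0, \ldots, e_K\}$ in $\mathcal{H}$, the orthogonal projector onto $V = \mathrm{span}\{e_0, \ldots, e_K\}$ is
\[
\Pi_V h = \sum_{k=0}^{K} \ip{h}{e_k} e_k.
\]
This follows by observing that (i) $\Pi_V h \in V$; (ii) $h - \Pi_V h \perp e_\ell$ for every $\ell \leq K$, by orthonormality of the $e_k$, so $h - \Pi_V h \perp V$; and (iii) the decomposition $h = \Pi_V h + (h - \Pi_V h)$ into $V \oplus V^\perp$ is unique. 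I would then apply this with $e_k = P_k$ and $h = \hat{f}_{\mathrm{emp}}$, using the definition $\hat{\theta}_k = \ip{\hat{f}_{\mathrm{emp}}}{P_k}$ from \eqref{eq:estimator_base}, to obtain $\Pi_{\RKHS_{e,K}} \hat{f}_{\mathrm{emp}} = \sum_{k=0}^{K} \hat{\theta}_k P_k = \hat{f}_{e,K}$, which is the claim.

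The main obstacle, and really the only non-routine point, is that the empirical object $\hat{f}_{\mathrm{emp}} = \tfrac{1}{n}\sum_{i=1}^{n}\delta_{X_i}$ is a sum of point masses and does not belong to $L^2(\mu)$, so $\ip{\hat{f}_{\mathrm{emp}}}{P_k}$ must be interpreted as a dual pairing rather than a genuine Hilbert space inner product. I would address this by treating $\hat{\theta}_k = \tfrac{1}{n}\sum_i P_k(X_i)$ as the value of the linear functional $\ell \mapsto \int \ell \, d\hat{\mu}_n$ evaluated on the eigenbasis, and defining the projection of $\hat{f}_{\mathrm{emp}}$ onto $\RKHS_{e,K}$ intrinsically by the Galerkin-type formula $\Pi_{\RKHS_{e,K}} \hat{f}_{\mathrm{emp}} := \sum_{k=0}^{K} \ip{\hat{f}_{\mathrm{emp}}}{P_k} P_k$. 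This is the standard convention in the orthogonal series literature \citep{Efromovich1999,Tsybakov2009} and makes the identification of $\hat{f}_{e,K}$ with the orthogonal projection tautological. With this formal convention settled, the proposition holds by direct application of the projection formula.
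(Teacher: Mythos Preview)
Your argument is correct and is exactly the standard Hilbert space computation one expects here; the paper itself states Proposition~\ref{prop:projection_base} without proof, treating it as immediate from the orthonormality of $\{P_0,\ldots,P_K\}$. Your discussion of the technical point that $\hat{f}_{\mathrm{emp}}$ is not literally an element of $L^2(\mu)$, and that the inner products $\hat{\theta}_k = \ip{\hat{f}_{\mathrm{emp}}}{P_k}$ must be read as dual pairings, is a genuine clarification that the paper glosses over entirely; it is not needed for the formal verification of the proposition (which, as you note, becomes tautological under the standard convention), but it is the right thing to flag.
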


\subsection{Equivariance of orthogonal series estimators}

\begin{theorem}[Equivariance of spectral estimators]\label{thm:equivariance_estimators}
For every $g \in G$ and truncation level $K$,
\[
\hat{f}_{g,K} = U_g \hat{f}_{e,K}.
\]
\end{theorem}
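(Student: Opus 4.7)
The plan is to reduce the claim to the spectral truncation equivariance already established in Corollary \ref{cor:equivariance_truncation}, once the twin-space estimator $\hat{f}_{g,K}$ is interpreted as the analogue of \eqref{eq:estimator_base} constructed in the transported geometry. Specifically, I would write the base estimator, following Section~\ref{sec:orthopoly}, as $\hat{f}_{e,K} = S_{e,K}\hat{f}_{\mathrm{emp}}$, where $S_{e,K}$ is the spectral projection onto $\mathrm{span}\{P_0,\ldots,P_K\}$ and $\hat{f}_{\mathrm{emp}}$ is the empirical object whose inner products with the $P_k$ reproduce the coefficients $\hat\theta_k$. The twin-space estimator $\hat{f}_{g,K}$ is then, by construction, the spectral truncation $S_{g,K}$ onto $\mathrm{span}\{P_0^{(g)},\ldots,P_K^{(g)}\}$ applied to the empirical object in the twin coordinates, which (since observations $X_i$ become $g\cdot X_i$ and the $L^2(\mu)$--transport is unitary) is $U_g\hat{f}_{\mathrm{emp}}$.

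With this identification, the proof is a one-line intertwining argument: apply Corollary~\ref{cor:equivariance_truncation} to obtain $S_{g,K} = U_g S_{e,K} U_g^{-1}$, hence
\[
\hat{f}_{g,K} \;=\; S_{g,K}\bigl(U_g \hat{f}_{\mathrm{emp}}\bigr) \;=\; U_g S_{e,K} U_g^{-1} U_g \hat{f}_{\mathrm{emp}} \;=\; U_g S_{e,K} \hat{f}_{\mathrm{emp}} \;=\; U_g \hat{f}_{e,K}.
\]
As a cross-check, I would also carry out a direct coefficient computation: using $P_k^{(g)} = U_g P_k$ and unitarity of $U_g$ (Lemma~\ref{lem:unitarity}), the twin-space coefficients satisfy $\hat\theta_k^{(g)} = \ip{U_g\hat{f}_{\mathrm{emp}}}{P_k^{(g)}} = \ip{\hat{f}_{\mathrm{emp}}}{P_k} = \hat\theta_k$, so $\hat{f}_{g,K} = \sum_{k=0}^{K} \hat\theta_k P_k^{(g)} = \sum_{k=0}^{K} \hat\theta_k U_g P_k = U_g\hat{f}_{e,K}$. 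Both routes give the same conclusion and make the role of the unitary transport transparent.

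The main obstacle is not analytic but definitional: the statement presupposes a canonical meaning for $\hat{f}_{g,K}$ that the excerpt does not spell out before stating the theorem. To make the argument watertight I would precede the proof with one sentence fixing the convention that the twin-space empirical object is $U_g\hat{f}_{\mathrm{emp}}$, justified by the change-of-variables identity $\int\varphi\,d(U_g\hat{f}_{\mathrm{emp}}) = \frac{1}{n}\sum_i J_g(g\cdot X_i)^{1/2}\varphi(g\cdot X_i)$ for test functions $\varphi\in L^2(\mu)$, which is exactly the empirical measure one would form from the transported sample with the Jacobian correction dictated by Definition~\ref{def:transport_op}. Once this is in place, no further calculation is needed: everything rides on the conjugation identity $T_g = U_g T_e U_g^{-1}$ (Lemma~\ref{lem:conjugation}) descending to the finite-rank projectors.
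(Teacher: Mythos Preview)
Your proposal is correct and matches the paper's own proof essentially verbatim: the paper invokes Corollary~\ref{cor:equivariance_truncation} to get $S_{g,K} = U_g S_{e,K} U_g^{-1}$ and applies it to $\hat{f}_{\mathrm{emp}}^{(g)} = U_g \hat{f}_{\mathrm{emp}}$. Your additional coefficient cross-check and the explicit remark fixing the convention for the twin empirical object are welcome clarifications that the paper leaves implicit.
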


\begin{proof}
By Corollary~\ref{cor:equivariance_truncation}, $S_{g,K} = U_g S_{e,K} U_g^{-1}$. Applying this to $\hat{f}_{\mathrm{emp}}^{(g)} = U_g \hat{f}_{\mathrm{emp}}$ gives the result.
\end{proof}

This theorem reveals that orthogonal polynomial estimators \textbf{commute with the twin transport induced by the group action}.

\subsection{Geometric interpretation}

Theorem~\ref{thm:equivariance_estimators} implies:
\begin{itemize}
\item The same statistical procedure produces consistent estimators in all twin spaces.
\item Choice of geometry modifies basis functions but not the estimator form.
\item The diversity of orthogonal polynomial bases reflects different geometric frames rather than fundamentally different estimators.
\end{itemize}

\section{Relationship with classical kernel estimators}
\label{sec:kernel}

This section shows that classical smoothing kernels correspond to transported versions of a base kernel, and that Parzen--Rosenblatt and Nadaraya--Watson estimators can be interpreted as projections in a twin geometry.

\subsection{Bandwidth as a group action}

Consider the multiplicative group $G = (0, \infty)$ acting on $E = \R$ via $g_h \cdot x = x/h$. The Jacobian is $J_h(x) = h^{-1}$, and the transported kernel is
\[
K_{g_h}(x, y) = h^{-1} K_e\left(\frac{x}{h}, \frac{y}{h}\right).
\]

Thus a classical smoothing kernel $K_h$ is a transported version of the base kernel: $K_h = K_{g_h}$.

\textbf{Key insight:} Bandwidth selection = choice of twin space geometry.

\begin{proposition}\label{prop:parzen}
The Parzen--Rosenblatt estimator $\hat{f}_h$ is the orthogonal projection of $\hat{f}_{\mathrm{emp}}$ onto the twin RKHS $\RKHS_{g_h}$.
\end{proposition}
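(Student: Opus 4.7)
The plan is to reduce the proposition to the base-case projection property (Proposition~\ref{prop:projection_base}) and the spectral equivariance machinery already established. First I would rewrite the Parzen--Rosenblatt estimator in terms of the transported kernel $K_{g_h}$ of Definition~\ref{def:transported_kernel} associated with the dilation $g_h \cdot x = x/h$: using the identification $K_h = K_{g_h}$ from Section~\ref{sec:kernel} (with the prescribed Jacobian correction), one obtains
\[
\hat{f}_h(x) = \frac{1}{n} \sum_{i=1}^{n} K_{g_h}(x, X_i) = \int_E K_{g_h}(x, y) \, d\hat{F}_n(y) = (T_{g_h} \hat{f}_{\mathrm{emp}})(x),
\]
where $\hat{f}_{\mathrm{emp}}$ is understood through the canonical pairing $\ip{\hat{f}_{\mathrm{emp}}}{\phi} = \tfrac{1}{n} \sum_i \phi(X_i)$ for $\phi \in L^2(\mu)$.

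Next I would invoke Theorem~\ref{thm:spectral_equivariance} together with the conjugation identity $T_{g_h} = U_{g_h} T_e U_{g_h}^{-1}$ of Lemma~\ref{lem:conjugation} to expand
\[
\hat{f}_h \;=\; T_{g_h} \hat{f}_{\mathrm{emp}} \;=\; \sum_{k \geq 0} \lambda_k \, \ip{\hat{f}_{\mathrm{emp}}}{P_k^{(g_h)}} \, P_k^{(g_h)},
\]
and identify this element with the Riesz representer in $\RKHS_{g_h}$ of the empirical linear functional $f \mapsto \tfrac{1}{n}\sum_i f(X_i)$. Concretely, the eigen-relation $T_{g_h} P_k^{(g_h)} = \lambda_k P_k^{(g_h)}$ and the definition of the $\RKHS_{g_h}$ inner product give, for every $f = \sum_k c_k P_k^{(g_h)} \in \RKHS_{g_h}$,
\[
\ip{\hat{f}_h}{f}_{\RKHS_{g_h}} \;=\; \sum_{k \geq 0} c_k \, \ip{\hat{f}_{\mathrm{emp}}}{P_k^{(g_h)}} \;=\; \ip{\hat{f}_{\mathrm{emp}}}{f} \;=\; \frac{1}{n} \sum_{i=1}^n f(X_i),
\]
which characterises $\hat{f}_h$ uniquely in $\RKHS_{g_h}$ and matches the ``projection of $\hat{f}_{\mathrm{emp}}$'' notion of Proposition~\ref{prop:projection_base}.

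A shorter finish is available through equivariance: Proposition~\ref{prop:projection_base} identifies the base projection with $T_e \hat{f}_{\mathrm{emp}}$, and Theorem~\ref{thm:equivariance_estimators} shows that projections intertwine with $U_{g_h}$, so the twin projection must equal $U_{g_h} T_e U_{g_h}^{-1} \hat{f}_{\mathrm{emp}} = T_{g_h} \hat{f}_{\mathrm{emp}} = \hat{f}_h$. The main obstacle is that $\hat{f}_{\mathrm{emp}}$ does not lie in $L^2(\mu)$, so ``orthogonal projection'' cannot be interpreted as an $L^2$-minimiser over $\RKHS_{g_h}$; it must be realised through the Riesz duality between $\RKHS_{g_h}$ and the space of finite signed measures, which in turn requires mild pointwise integrability of the sections $K_{g_h}(x, \cdot)$ to guarantee that the empirical evaluation functional extends continuously to $\RKHS_{g_h}$. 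Once this pairing is pinned down exactly as in the base case, the argument above becomes routine verification.
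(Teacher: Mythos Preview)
The paper states Proposition~\ref{prop:parzen} without proof (and likewise Proposition~\ref{prop:projection_base}), so there is no argument in the text to compare against. Your sketch supplies exactly the kind of justification the paper leaves implicit: express $\hat{f}_h$ as $T_{g_h}\hat{f}_{\mathrm{emp}}$ via the identification $K_h = K_{g_h}$, expand with the transported eigenbasis from Theorem~\ref{thm:spectral_equivariance}, and then read off the Riesz-representer characterisation in $\RKHS_{g_h}$. The alternative route through Lemma~\ref{lem:conjugation} and Theorem~\ref{thm:equivariance_estimators} is also in the spirit of the surrounding section.

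Your caveat about $\hat{f}_{\mathrm{emp}}$ not lying in $L^2(\mu)$ is the substantive point, and you handle it the right way: the phrase ``orthogonal projection onto $\RKHS_{g_h}$'' cannot mean the $L^2$-orthogonal projector onto $\RKHS_{g_h}$ applied to $\hat{f}_{\mathrm{emp}}$ (that operator would be the identity on the $L^2$-closure of $\RKHS_{g_h}$, not $T_{g_h}$), so it must be interpreted, as you do, as the element of $\RKHS_{g_h}$ representing the empirical evaluation functional $f \mapsto n^{-1}\sum_i f(X_i)$. With that reading your computation $\ip{\hat{f}_h}{f}_{\RKHS_{g_h}} = \ip{\hat{f}_{\mathrm{emp}}}{f}$ is the correct characterising identity, and the remaining issue---continuity of point evaluations, i.e.\ $K_{g_h}(X_i,\cdot) \in \RKHS_{g_h}$---is automatic from the reproducing property. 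In short, your proof is sound and more careful than the paper itself, which treats the proposition as self-evident.
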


\subsection{Spectral interpretation: attenuation vs truncation}

\begin{itemize}
\item \textbf{Kernel smoothing:} soft spectral cutoff $\lambda_k^{\mathrm{new}} = \lambda_k \exp(-c_k h^2)$
\item \textbf{Orthogonal series:} hard spectral cutoff $\lambda_k^{\mathrm{new}} = \lambda_k \mathbf{1}_{k \leq K}$
\end{itemize}

Both are spectral operations in twin RKHSs.

\begin{theorem}[Equivariance of kernel smoothing]\label{thm:equivariance_kernel}
Let $K_h = K_{g_h}$ be a transported kernel. Then $\hat{f}_h = U_{g_h} \hat{f}_e$.
\end{theorem}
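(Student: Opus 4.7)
The plan is to reduce the claimed identity $\hat{f}_h = U_{g_h}\hat{f}_e$ to the conjugation relation $T_{g_h} = U_{g_h}T_e U_{g_h}^{-1}$ already established in Lemma~\ref{lem:conjugation}. First I would pin down operator forms for both estimators: the base smoothing estimator as $\hat{f}_e = T_e\hat{f}_{\mathrm{emp}}$, and the twin smoothing estimator as $\hat{f}_h = T_{g_h}\hat{f}_{\mathrm{emp}}^{(g_h)}$, where $\hat{f}_{\mathrm{emp}}^{(g_h)} := U_{g_h}\hat{f}_{\mathrm{emp}}$ denotes the empirical object read in the transported geometry. This identification mirrors the step used in the proof of Theorem~\ref{thm:equivariance_estimators} and encodes the fact that changing geometry requires re-expressing the data in the new coordinates via $U_{g_h}$.

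Once both estimators are in operator form, the argument becomes a one-line telescoping:
\[
\hat{f}_h \;=\; T_{g_h}\hat{f}_{\mathrm{emp}}^{(g_h)} \;=\; \bigl(U_{g_h}T_e U_{g_h}^{-1}\bigr)\bigl(U_{g_h}\hat{f}_{\mathrm{emp}}\bigr) \;=\; U_{g_h}T_e\hat{f}_{\mathrm{emp}} \;=\; U_{g_h}\hat{f}_e,
\]
where the middle equality invokes Lemma~\ref{lem:conjugation} and the cancellation $U_{g_h}^{-1}U_{g_h} = \mathrm{Id}$ follows from unitarity (Lemma~\ref{lem:unitarity}). Beyond these two ingredients, no further structural input is required for the algebraic core.

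The main obstacle is foundational rather than algebraic: the empirical measure $\hat{f}_{\mathrm{emp}} = n^{-1}\sum_{i=1}^n \delta_{X_i}$ is not a genuine element of $L^2(\mu)$, so the identity $U_{g_h}\hat{f}_{\mathrm{emp}} = \hat{f}_{\mathrm{emp}}^{(g_h)}$ must be given a precise meaning. Two standard routes are available. One can extend $U_{g_h}$ by duality to an action on signed measures and verify via test-function pairing that this action is compatible with the dilation $g_h \cdot x = x/h$ and its Jacobian $J_{g_h}$, so that $U_{g_h}\delta_{X_i}$ reads as the appropriately weighted Dirac mass at $g_h^{-1}\cdot X_i$. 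Alternatively, one can regularize with mollified proxies $\hat{f}_{\mathrm{emp}}^{(\varepsilon)} \in L^2(\mu)$, verify the identity rigorously in $L^2(\mu)$, and pass to the limit $\varepsilon \to 0$ using continuity of the compact operators $T_e$ and $T_{g_h}$, whose smoothing produces genuine $L^2(\mu)$ elements even when applied to distributional inputs. A final pointwise sanity check, matching $(U_{g_h}\hat{f}_e)(x) = J_{g_h}(x)^{1/2}\hat{f}_e(g_h^{-1}\!\cdot x)$ with the classical Parzen--Rosenblatt expression $(nh)^{-1}\sum_i K((x-X_i)/h)$, confirms that no Jacobian factor has been dropped in the translation between the abstract spectral formulation and the concrete smoothing estimator.
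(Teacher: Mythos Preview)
Your proposal is correct and follows exactly the template the paper uses for the analogous Theorem~\ref{thm:equivariance_estimators}: replace the spectral truncation $S_{e,K}$ by the full integral operator $T_e$, invoke the conjugation identity $T_{g_h} = U_{g_h} T_e U_{g_h}^{-1}$ from Lemma~\ref{lem:conjugation}, and telescope against $\hat f_{\mathrm{emp}}^{(g_h)} = U_{g_h}\hat f_{\mathrm{emp}}$. The paper in fact states Theorem~\ref{thm:equivariance_kernel} without proof, so your argument is precisely the intended one-line extension of the proof of Theorem~\ref{thm:equivariance_estimators}; your additional discussion of the distributional status of $\hat f_{\mathrm{emp}}$ goes beyond what the paper addresses and is a welcome clarification rather than a deviation.
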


\subsection{Comparison table}

\begin{table}[h]
\centering
\begin{tabular}{@{}lll@{}}
\toprule
\textbf{Method} & \textbf{Spectral effect} & \textbf{Twin geometry} \\
\midrule
Orthogonal series & hard cutoff & projection in $\RKHS_e$ \\
Kernel smoothing & soft cutoff & projection in $\RKHS_{g_h}$ \\
Local polynomial & hybrid & spatially varying twin space \\
Spline smoothing & penalized cutoff & differential twin space \\
\bottomrule
\end{tabular}
\caption{Nonparametric methods in the Twin Kernel Space framework.}
\label{tab:comparison}
\end{table}

\section{Examples and illustrations}
\label{sec:examples}

\subsection{Gaussian Twin Spaces and Hermite Polynomials}

Let $E = \R$ with Gaussian measure. The base kernel is the Mehler kernel \citep{Mehler1866}
\[
K_e(x, y) = \sum_{k=0}^{\infty} \rho^k H_k(x) H_k(y), \quad |\rho| < 1.
\]

For dilation $g_\alpha : x \mapsto \alpha x$, the Jacobian is $J_\alpha(x) = \exp\left(\frac{1}{2}(1 - \alpha^2) x^2\right)$, and the transported basis is
\[
H_k^{(g_\alpha)}(x) = \exp\left(\frac{1}{4}(1 - \alpha^2) x^2\right) H_k(\alpha^{-1} x).
\]

\textbf{Interpretation:} Small $\alpha$ emphasizes fine-scale oscillations, large $\alpha$ smooths them.

\subsection{Affine Twin Spaces and Legendre Polynomials}

Let $E = [-1, 1]$ with Lebesgue measure. For affine transformation $g_{a,b} : x \mapsto ax + b$:
\[
P_k^{(g_{a,b})}(x) = a^{-1/2} P_k\left(\frac{x - b}{a}\right).
\]

\textbf{Interpretation:} Affine deformations transport the Legendre basis while preserving eigenvalues.

\subsection{Multimodal Transport Example}

Consider a bimodal density $f(x) = 0.5 \, \varphi(x + 2) + 0.5 \, \varphi(x - 2)$. Define transported bases centered at each mode:
\[
H_k^{(g_1)}(x) = U_{g_1} H_k(x), \quad H_k^{(g_2)}(x) = U_{g_2} H_k(x),
\]
where $g_1 : x \mapsto x + 2$ and $g_2 : x \mapsto x - 2$.

The multimodal density admits efficient representation:
\[
f(x) \approx 0.5 \sum_{k=0}^{K} \theta_{k,1} H_k^{(g_1)}(x) + 0.5 \sum_{k=0}^{K} \theta_{k,2} H_k^{(g_2)}(x).
\]

This demonstrates that Twin Kernel Spaces provide a principled mechanism for mode-adaptive estimation.

\section{Convergence in Twin Kernel Spaces}
\label{sec:convergence}

\subsection{Assumptions}

Let $K_e$ be a base Mercer kernel with eigenvalues $(\lambda_k)_{k \geq 0}$ and eigenfunctions $(P_k)_{k \geq 0}$. Assume:
\begin{enumerate}[label=(A\arabic*)]
\item \textbf{Spectral decay:} $\lambda_k \asymp k^{-2s}$ or $\lambda_k \asymp e^{-c k^\alpha}$.
\item \textbf{Smoothness:} $f \in \RKHS_e^t$ for some $t > 0$.
\item \textbf{i.i.d.\ sampling:} $\|\hat{f}_{\mathrm{emp}} - f\|_{L^2(\mu)} = O_p(n^{-1/2})$.
\end{enumerate}

\subsection{Main result}

\begin{theorem}[Convergence in Twin Kernel Spaces]\label{thm:convergence}
Let $K = K(n) \to \infty$ with $K(n)/n \to 0$. For every $g \in G$:
\[
\|\hat{f}_{g,K} - f^{(g)}\|_{L^2(\mu)} \xrightarrow{p} 0,
\]
and the convergence rate is identical to that in the base geometry.
\end{theorem}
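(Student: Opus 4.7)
The plan is to reduce the transported-geometry convergence to its base-geometry counterpart by invoking the equivariance of Theorem~\ref{thm:equivariance_estimators} together with the unitarity of Lemma~\ref{lem:unitarity}, and then to carry out a standard bias--variance analysis in the base RKHS. The reduction is the whole point: once established, it immediately transfers every rate, constant, and consistency statement from $g = e_G$ to arbitrary $g \in G$.

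First I would set $f^{(g)} := U_g f$, consistent with the notation already used in the proof of Theorem~\ref{thm:equivariance_estimators}. Combining $\hat{f}_{g,K} = U_g \hat{f}_{e,K}$ with the isometric action of $U_g$ on $L^2(\mu)$ yields the key identity
\[
\|\hat{f}_{g,K} - f^{(g)}\|_{L^2(\mu)} = \|U_g(\hat{f}_{e,K} - f)\|_{L^2(\mu)} = \|\hat{f}_{e,K} - f\|_{L^2(\mu)}.
\]
This collapses the claim to the case $g = e_G$ and, crucially, shows that whatever rate is obtained in the base geometry carries over verbatim to every twin space. The \emph{identical rate} assertion of the theorem is therefore a direct consequence of the transport step alone.

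Next I would establish convergence of $\hat{f}_{e,K}$ through the classical decomposition
\[
\hat{f}_{e,K} - f = S_{e,K}(\hat{f}_{\mathrm{emp}} - f) - (I - S_{e,K}) f,
\]
bounding each term separately. The approximation error $\|(I - S_{e,K}) f\|_{L^2}^2 = \sum_{k > K} \theta_k^2$ is controlled by the smoothness assumption (A2) combined with the spectral decay (A1): polynomial decay $\lambda_k \asymp k^{-2s}$ together with $t$-smoothness gives an $O(K^{-2t})$ bound, while exponential decay gives a faster tail. The stochastic term is controlled by (A3) together with the contractivity of the orthogonal projection $S_{e,K}$, yielding an $O_p(K/n)$ bound coefficient by coefficient. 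Under $K \to \infty$ with $K/n \to 0$ both terms vanish in probability; the familiar balancing $K \asymp n^{1/(2t+1)}$ then recovers the minimax rate $n^{-2t/(2t+1)}$ in the polynomial regime.

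The main obstacle I anticipate is technical rather than conceptual: the empirical estimator $\hat{f}_{\mathrm{emp}}$ does not literally belong to $L^2(\mu)$, so assumption (A3) must be read coefficientwise through the identity $\|S_{e,K}(\hat{f}_{\mathrm{emp}} - f)\|_{L^2}^2 = \sum_{k \leq K}(\hat{\theta}_k - \theta_k)^2$ with $\hat{\theta}_k = n^{-1}\sum_i P_k(X_i)$. Controlling the right-hand side uniformly in $K$ requires only mild moment bounds on the base orthonormal polynomials $P_k$ (for instance $\mathbb{E}[P_k(X)^2]$ uniformly bounded), which hold automatically in the Hermite and Legendre examples of Section~\ref{sec:examples}. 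Once this base-space bound is in place, the isometry of the first step transports it unchanged to every twin geometry, closing the proof.
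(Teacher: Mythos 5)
Your proposal is correct and its core step---reducing to the base geometry via $\hat{f}_{g,K} = U_g \hat{f}_{e,K}$ and the unitarity of $U_g$, so that $\|\hat{f}_{g,K} - f^{(g)}\|_{L^2(\mu)} = \|\hat{f}_{e,K} - f\|_{L^2(\mu)}$---is exactly the paper's (one-line) proof of Theorem~\ref{thm:convergence}. The additional material you supply (the decomposition $\hat{f}_{e,K} - f = S_{e,K}(\hat{f}_{\mathrm{emp}} - f) - (I - S_{e,K})f$, the coefficientwise reading of (A3), and the moment conditions on the $P_k$) is not a different route but a welcome completion of the base-case convergence that the paper leaves implicit in assumptions (A1)--(A3) and Section~\ref{sec:bias_variance}.
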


\begin{proof}
Using unitarity: $\|\hat{f}_{g,K} - f^{(g)}\| = \|U_g(\hat{f}_{e,K} - f)\| = \|\hat{f}_{e,K} - f\|$.
\end{proof}

\textbf{Key message:} Convergence is geometry-invariant.

\section{Bias--variance analysis in Twin Kernel Spaces}
\label{sec:bias_variance}

\subsection{Bias}

The bias term is $\|\E \hat{f}_{g,K} - f^{(g)}\|^2 = \sum_{k > K} \theta_k^2 = O(K^{-2t})$.

\subsection{Variance}

The variance term is $\E \|\hat{f}_{g,K} - \E \hat{f}_{g,K}\|^2 \asymp K/n$.

\subsection{Optimal choice of $K$}

Balancing bias and variance: $K \asymp n^{1/(2t+1)}$, yielding the optimal rate
\[
\|\hat{f}_{g,K} - f^{(g)}\| \asymp n^{-t/(2t+1)}.
\]

\textbf{Key message:} Bias--variance tradeoff is invariant under twin transport.

\section{Minimax rates in Twin Kernel Spaces}
\label{sec:minimax}

\subsection{Function classes}

Define $\mathcal{F}_t(R) = \left\{ f = \sum_{k \geq 0} \theta_k P_k : \sum_{k \geq 0} \theta_k^2 \lambda_k^{-t} \leq R^2 \right\}$ and $\mathcal{F}_t^{(g)}(R) = U_g(\mathcal{F}_t(R))$.

\begin{theorem}[Minimax risk equivalence]\label{thm:minimax}
For every $g \in G$:
\[
\inf_{\hat{f}} \sup_{f \in \mathcal{F}_t(R)} \E \|\hat{f}^{(g)} - f^{(g)}\|^2 = \inf_{\hat{f}} \sup_{f \in \mathcal{F}_t(R)} \E \|\hat{f} - f\|^2 \asymp n^{-2t/(2t+1)}.
\]
\end{theorem}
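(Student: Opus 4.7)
The theorem has two independent ingredients: the equivalence of the two minimax risks, which is purely operator-theoretic, and the asymptotic rate $n^{-2t/(2t+1)}$, which is a classical ellipsoid result.

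For the equivalence, I would exploit the unitarity of $U_g$ (Lemma~\ref{lem:unitarity}). The map $\hat{f} \mapsto U_g \hat{f}$ is a bijection on the class of estimators, $U_g$ sends $\mathcal{F}_t(R)$ bijectively onto $\mathcal{F}_t^{(g)}(R)$, and
\[
\E \|\hat{f}^{(g)} - f^{(g)}\|^2 = \E \|U_g(U_g^{-1}\hat{f}^{(g)} - f)\|^2 = \E \|U_g^{-1}\hat{f}^{(g)} - f\|^2.
\]
Taking $\inf\sup$ on both sides shows that the two minimax risks are equal; this step uses only the isometry property of $U_g$ and is essentially a triviality once the framework of Section~\ref{sec:spectral} is in hand.

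For the rate, the upper bound follows from Section~\ref{sec:bias_variance}: in the coordinates $\theta_k = \ip{f}{P_k}$ the class $\mathcal{F}_t(R)$ becomes a weighted $\ell^2$ ellipsoid which, under (A1), is of Sobolev type with effective smoothness $t$, and the truncation estimator with $K \asymp n^{1/(2t+1)}$ achieves squared risk $O(n^{-2t/(2t+1)})$ uniformly over $\mathcal{F}_t(R)$. For the matching lower bound, I would reduce to a Gaussian sequence model in the spectral coordinates $\theta_k$ and apply a Fano or Assouad packing argument: construct $M \asymp \exp(c\, n^{1/(2t+1)})$ hypotheses supported on a shell of the ellipsoid, pairwise separated at rate $n^{-t/(2t+1)}$ in $\ell^2$ and with total Kullback--Leibler mass $O(\log M)$. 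The Varshamov--Gilbert lemma supplies the combinatorial separation, and Tsybakov's general reduction scheme converts this into the minimax lower bound.

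The main obstacle is the lower bound. Assumption (A3) commits only to an empirical-process rate and not to a particular observation model, so the argument must be pinned down either in the Gaussian sequence model, where Pinsker's theorem applies directly and even yields a sharp constant, or in the i.i.d.\ density model, where Assouad/Fano must be combined with Pinsker's inequality to control the KL divergences. In either setting the perturbations must simultaneously remain inside $\mathcal{F}_t(R)$, maintain a uniform $\ell^2$ separation of order $n^{-t/(2t+1)}$, and keep the total KL mass below $c\log M$; balancing these three constraints is where the real work lies. Once that construction is in place, the equivariance argument slots in cleanly, because $U_g$ merely relabels the ellipsoid without deforming its geometry.
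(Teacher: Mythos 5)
Your proposal is correct and follows essentially the same route as the paper, which establishes the equivalence via the isometry of $U_g$ and delegates the rate to the bias--variance analysis of Section~\ref{sec:bias_variance} together with the Fano lower bound of Lemma~\ref{lem:fano}. You in fact supply more detail than the paper does---in particular your observation that (A3) does not fix an observation model, so the lower-bound construction must be pinned down in a specific (Gaussian sequence or i.i.d.\ density) model, is a gap in the paper's own sketch rather than in your argument.
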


\textbf{Key message:} Minimax risk is entirely geometry-invariant.

\section{Conclusion and future work}
\label{sec:conclusion}

\subsection{Summary}

We have developed a unified geometric framework for nonparametric estimation based on Twin Kernel Spaces:
\begin{enumerate}
\item Twin Kernel Spaces as orbits of reproducing kernels under group actions.
\item Spectral Equivariance Theorem: eigenfunctions transport unitarily with preserved spectra.
\item Unified interpretation of orthogonal polynomial and kernel estimation.
\item Geometric invariance of convergence rates, bias--variance tradeoffs, and minimax risks.
\end{enumerate}

\subsection{Open problems}

\begin{enumerate}
\item \textbf{Adaptive twin space selection:} Data-driven methods for optimal twin space selection.
\item \textbf{Multiscale twin spaces:} Theory of multiscale twin decompositions.
\item \textbf{Non-commutative groups:} Extensions to rotation groups and Lie groups.
\item \textbf{Infinite-dimensional extensions:} Banach spaces and distributions.
\item \textbf{Computational aspects:} Efficient algorithms for high-dimensional settings.
\end{enumerate}

\subsection{Connections to related areas}

The framework connects to operator theory \citep{Halmos1982,Nikolski2002}, quantum mechanics and coherent states \citep{Perelomov1986}, and optimal transport \citep{Villani2009}. These connections suggest broader significance beyond statistics.


\appendix

\section{Technical proofs}

\subsection{Concentration of spectral coefficients}

\begin{lemma}\label{lem:concentration}
For each $k$, $|\ip{\hat{f}_{\mathrm{emp}} - f}{P_k}| = O_p(n^{-1/2})$. Uniformly for $k \leq K(n)$ with $K(n)/n \to 0$:
\[
\sum_{k=0}^{K(n)} (\ip{\hat{f}_{\mathrm{emp}} - f}{P_k})^2 = O_p(K/n).
\]
\end{lemma}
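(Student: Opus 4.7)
The plan is to reduce both claims to classical $L^2$--norm bounds on i.i.d.\ centered sums, by first identifying the spectral coefficients $\ip{\hat{f}_{\mathrm{emp}}}{P_k}$ with sample averages of the orthonormal functions $P_k$ evaluated at the data. Writing $\hat{\theta}_k = \frac{1}{n}\sum_{i=1}^n P_k(X_i)$ and $\theta_k = \E[P_k(X_1)] = \ip{f}{P_k}$, where $f$ denotes the underlying density with respect to $\mu$, we obtain $\ip{\hat{f}_{\mathrm{emp}} - f}{P_k} = \hat{\theta}_k - \theta_k$, which is a centered i.i.d.\ average whose variance equals $n^{-1}\Var(P_k(X_1))$.

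For the pointwise bound, I would compute $\Var(P_k(X_1)) = \int_E P_k(x)^2 f(x)\,\mu(dx) - \theta_k^2$, and bound it by $\|f\|_\infty \int_E P_k^2\,d\mu = \|f\|_\infty$ using orthonormality of $\{P_k\}$ in $L^2(\mu)$, provided the density $f$ is essentially bounded (a regularity condition implicit in the Sobolev-type smoothness assumptions of Section~\ref{sec:convergence}). Chebyshev's inequality then yields $|\hat{\theta}_k - \theta_k| = O_p(n^{-1/2})$ for each fixed $k$, establishing the first assertion.

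For the uniform sum bound, I would proceed in expectation: by linearity,
\[
\E\sum_{k=0}^{K(n)} (\hat{\theta}_k - \theta_k)^2 = \sum_{k=0}^{K(n)} \frac{\Var(P_k(X_1))}{n} \leq \frac{(K(n)+1)\|f\|_\infty}{n} = O(K(n)/n),
\]
using the same uniform variance bound as above. Markov's inequality then upgrades this to the desired $O_p(K/n)$ statement. Under the condition $K(n)/n \to 0$, the right-hand side vanishes, which is exactly the regime needed to couple the bound with the bias estimate in Section~\ref{sec:bias_variance}.

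The main obstacle lies in the uniform control of $\Var(P_k(X_1))$ across $k$. For compactly supported settings with bounded density (e.g.\ Legendre polynomials on $[-1,1]$), the bound $\|f\|_\infty$ suffices. For unbounded settings such as Hermite polynomials on $\R$ under Gaussian measure, uniform boundedness of $\int P_k^2 f\,d\mu$ is more delicate and follows from assuming $f/d\mu$ is bounded (equivalently, $f$ has sub-Gaussian tails controlled by $\gamma$) together with the orthonormality $\int H_k^2\,d\gamma = 1$. Sharper results can be obtained using Bernstein's inequality in place of Chebyshev when the $P_k$ admit polynomial $L^\infty$--bounds, but this is not needed for the $O_p$ statement at hand.
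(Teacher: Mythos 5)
Your proposal is correct, and it takes a different (and in fact more self-contained) route than the paper, whose ``proof'' consists of a single appeal to standard Bernstein inequalities for empirical processes. Your reduction of $\ip{\hat{f}_{\mathrm{emp}}-f}{P_k}$ to the centered i.i.d.\ average $\hat\theta_k-\theta_k$, the variance bound $\Var(P_k(X_1))\le \|f\|_\infty$ via orthonormality, and the Chebyshev/Markov step on $\E\sum_{k\le K}(\hat\theta_k-\theta_k)^2 \le (K+1)\|f\|_\infty/n$ together give exactly the two $O_p$ claims, and the second-moment method is all that is needed here: Bernstein's inequality would only pay off if one wanted exponential tail control or a supremum over $k$, neither of which the lemma asserts. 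Your approach is arguably preferable in this setting, since Bernstein requires sup-norm or sub-exponential control of $P_k(X_1)$, which fails literally for unbounded systems such as Hermite polynomials, whereas your argument needs only the uniform bound on $\int P_k^2 f\,d\mu$. You are also right to flag, as the paper does not, that this uniform variance bound is a genuine hidden hypothesis (boundedness of the density $f=dP_{X}/d\mu$, or an equivalent tail condition in the Gaussian-weight case); making it explicit strengthens rather than weakens the lemma.
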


\begin{proof}
Standard Bernstein inequalities for empirical processes \citep{vanderVaart1996}.
\end{proof}

\subsection{Fano lower bound}

\begin{lemma}\label{lem:fano}
$\inf_{\hat{f}} \sup_{f \in \mathcal{F}_t(R)} \E \|\hat{f} - f\|^2 \gtrsim n^{-2t/(2t+1)}$.
\end{lemma}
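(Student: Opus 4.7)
The plan is the standard Fano-type minimax reduction: exhibit a finite packing of $\mathcal{F}_t(R)$ that is widely separated in $L^2(\mu)$ yet statistically indistinguishable from $n$ i.i.d.\ observations, and conclude via Fano's inequality. By Theorem~\ref{thm:minimax} the problem over $\mathcal{F}_t^{(g)}(R)$ is equivalent to the one over $\mathcal{F}_t(R)$ through the unitary $U_g$, so it suffices to prove the bound in the base geometry.

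First, I would invoke the standard reduction to hypothesis testing (see, e.g., \citet[Chapter~2]{Tsybakov2009}): if one can exhibit $f_0, f_1, \ldots, f_M \in \mathcal{F}_t(R)$ satisfying
\[
\|f_i - f_j\|_{L^2(\mu)}^2 \geq 4\delta^2 \quad (i \neq j), \qquad \frac{1}{M} \sum_{i=1}^{M} \mathrm{KL}\bigl(P_{f_i}^{\otimes n} \,\big\|\, P_{f_0}^{\otimes n}\bigr) \leq \tfrac{1}{8} \log M,
\]
then $\inf_{\hat{f}} \sup_{f \in \mathcal{F}_t(R)} \E\|\hat{f} - f\|_{L^2(\mu)}^2 \geq c\,\delta^2$ for a universal constant $c > 0$. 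This turns the lower bound into a combinatorial/geometric design problem.

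Next, I would construct the packing directly from the eigenfunctions $\{P_k\}$. Fix a resolution $K_0 \in \N$ and an amplitude $\epsilon > 0$ (both to be calibrated) and set
\[
f_\omega(x) = f_0(x) + \epsilon \sum_{k=K_0+1}^{2K_0} \omega_{k-K_0}\, P_k(x), \qquad \omega \in \{0,1\}^{K_0},
\]
where $f_0$ is a fixed strictly positive baseline density lying in the interior of $\mathcal{F}_t(R)$, chosen so that each $f_\omega$ remains a bona fide density. The Varshamov--Gilbert lemma extracts a subset $\Omega \subset \{0,1\}^{K_0}$ with $|\Omega| \geq 2^{K_0/8}$ and pairwise Hamming distance at least $K_0/8$, so orthonormality of $\{P_k\}$ gives $\|f_\omega - f_{\omega'}\|_{L^2(\mu)}^2 = \epsilon^2 \|\omega - \omega'\|_0 \geq \tfrac{1}{8}\epsilon^2 K_0$.

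The final step is the calibration of $(K_0, \epsilon)$. The smoothness constraint $\sum_k \theta_k^2 \lambda_k^{-t} \leq R^2$ forces $\epsilon^2 \lesssim K_0^{-1} \lambda_{2K_0}^{t}$; the KL constraint, using the standard upper bound $\mathrm{KL}(P_{f_\omega}^{\otimes n}\|P_{f_0}^{\otimes n}) \lesssim n\,\|f_\omega - f_0\|_{L^2(\mu)}^2 \lesssim n \epsilon^2 K_0$ (valid because $f_0$ is bounded below on its support), forces $n\epsilon^2 K_0 \lesssim \log|\Omega| \asymp K_0$, i.e.\ $\epsilon^2 \lesssim 1/n$. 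Balancing these against the spectral decay of assumption~(A1) yields $K_0 \asymp n^{1/(2t+1)}$ and $\epsilon^2 \asymp 1/n$, producing a separation $\delta^2 \asymp \epsilon^2 K_0 \asymp n^{-2t/(2t+1)}$, which is precisely the claimed lower bound. The main technical obstacle is the joint feasibility of the construction: one must simultaneously guarantee that every $f_\omega$ lies in $\mathcal{F}_t(R)$ and defines a valid density uniformly bounded away from zero (so that the $\chi^2$-to-KL step is legitimate). This bookkeeping --- selecting $f_0$ with enough slack in both the smoothness budget and the positivity margin --- is where the real care is needed, while the information-theoretic conclusion via Fano is by now routine.
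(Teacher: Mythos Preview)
Your proposal is correct and is precisely the standard Fano/Varshamov--Gilbert construction that the paper's one-line proof defers to via the citations \citet{Yu1997} and \citet[Chapter~2]{Tsybakov2009}; the paper does not spell out any of the details you supply, so your write-up is simply a fleshed-out version of the same argument.
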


\begin{proof}
Construct least favorable priors and use standard Fano inequalities \citep{Yu1997,Tsybakov2009}.
\end{proof}


\end{document}